\renewcommand{\baselinestretch}{1}
\newtheorem{prethm}{{\bf Theorem}}
\newenvironment{thm}{\begin{prethm}{\hspace{-0.5
               em}{\bf.}}}{\end{prethm}}
\newtheorem{prepro}[prethm]{Proposition}
\newtheorem{prelem}[prethm]{Lemma}
\newtheorem{precor}[prethm]{Corollary}
\newtheorem{preque}[prethm]{Question}
\newtheorem{precon}[prethm]{Conjecture}
\newtheorem{preremark}{{\bf Remark}}
\newenvironment{rem}{\begin{preremark}\em{\hspace{-0.5
              em}{\bf.}}}{\end{preremark}}
\newtheorem{preexample}{{\bf Example}}
\newtheorem{preproblem}{{\bf problem}}
\newtheorem{preproof}{{\bf Proof.}}
\newenvironment{proof}[1]{\begin{preproof}{\rm
               #1}\hfill{$\Box$}}{\end{preproof}}
\renewcommand{\thefootnote}
\begin{document}
\title{Remarks on Vector Space Generated by the Multiplicative Commutators of a Division Ring}
\author{M. Aaghabali, Z. Tajfirouz\\
{\footnotesize{\em School of Mathematics, University of Edinburgh,
Edinburgh EH9 3JZ, Scotland}}}
\footnotetext{E-mail Addresses: {\tt mehdi.aaghabali@ed.ac.uk}, {\tt z\_tajfirouz@yahoo.com}}
\footnotetext{The first author acknowledges the ERC grant 320974.}
\date{}
\maketitle
%------------------------------------------------------------------------------------------------------------------------------
\begin{quote}
{\small \hfill{\rule{13.3cm}{.1mm}\hskip2cm} \textbf{Abstract}
Let $D$ be a division ring with centre $F.$ An element of the form $xyx^{-1}y^{-1}\in D$ is called a multiplicative commutator. Let $T(D)$ be the vector space over $F$ generated by all multiplicative commutators in $D.$ In~\cite{aa1}, authors have conjectured that every division ring is generated as a vector space over its centre by all of its multiplicative commutators. In this note it is shown that if $D$ is centrally finite, then the conjecture holds.
\vspace{1mm} {\renewcommand{\baselinestretch}{1}
\parskip = 0 mm

\noindent{\small {\it AMS Classification}: 17A01, 17A35.}}

\noindent{\small {\it Keywords}: Division Ring, Commutators.}}

\vspace{-3mm}\hfill{\rule{13.3cm}{.1mm}\hskip2cm}
\end{quote}
%-------------------------------------------------------------------------------------------------------------------------------
\section{Introduction}
  Throughout this paper $D$ is a division ring with centre $F.$ An element of the form $xyx^{-1}y^{-1}\in D$ is called a {\it multiplicative commutator}, and $[D,D]$ denote the {\it additive commutator} subgroup of $D$. Also we denote
by $T(D)$ the vector space generated by the set of all multiplicative commutators of $D$ over $F.$ An element $a\in D$ is said to be {\it algebraic} over $F$ if $a$ satisfies a non-zero polynomial in $F[x].$ A set $S\subseteq D$ is called algebraic if each of its elements is algebraic over $F.$ When $K$ is a finite-dimensional extension of $F,$ then we denote by $Tr_{K/F},$ the regular trace of $K$ over $F.$ If $a\in D,$ then $F(a)$ denotes the subfield of $D$ generated by $F$ and $\{a\}.$

The division ring generated by additive commutators or multiplicative commutators of $D$ is the whole $D$~\cite[pp. 205, 211]{lam}. In the algebraic and zero characteristic case, it was proved that $D$ is generated as a vector space over the centre   by the union of its additive commutators and the unity, see~\cite{akb}. In~\cite{aa1}, the first author and his colleagues study the $F$-vector space $T(D)$ generated by the set of multiplicative commutators $\{xyx^{-1}y^{-1}| x,y\in D^*\}.$ They prove that $T(D)$ contains all separable elements of $D$ and if $T(D)$ is radical over $F,$ then $D=F.$ Furthermore, if ${\rm dim}_F T(D) <\infty,$ then ${\rm dim}_F D <\infty.$ They also prove that if $D$ is an algebraic division ring over its centre with ${\rm char}(D) = 0,$ then $T(D)=D.$ The following theorem is crucial in their studies.
\begin{thm} {\rm \cite[p. 156]{rowen}}\label{rowen}
Let $D$ be a division ring and $K$ be a subfield of $D.$ For $a\in D,$ if ${\rm dim}_KK[a]\geq n$, then for any distinct elements $\alpha_1,\dots, \alpha_n\in Z(D)$, $(a-\alpha_1)^{-1},\dots, (a-\alpha_n)^{-1}$ are linearly independent.
\end{thm}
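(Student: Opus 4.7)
The plan is to convert a hypothetical linear dependence among the $(a-\alpha_i)^{-1}$ into a polynomial identity satisfied by $a$ of degree at most $n-1$, in direct contradiction to the hypothesis $\dim_K K[a]\geq n$, and then to isolate each coefficient by a Lagrange-style evaluation. The crucial feature is that every $\alpha_i$ lies in $Z(D)$, so each element $a-\alpha_j$ is a polynomial in $a$ with central coefficients and therefore commutes with every $(a-\alpha_i)^{-1}$; consequently the non-commutativity of $D$ never gets in the way, and all the manipulations reduce to those in a commutative polynomial ring.

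Concretely, assume $\sum_{i=1}^n c_i(a-\alpha_i)^{-1}=0$ with scalars $c_i\in K$. I would multiply this relation by $\prod_{j=1}^n(a-\alpha_j)$ and, thanks to centrality, move each factor past every inverse to obtain
\[
\sum_{i=1}^n c_i\prod_{j\neq i}(a-\alpha_j)\;=\;0.
\]
Viewing this as $P(a)=0$ for the polynomial $P(x)=\sum_{i=1}^n c_i\prod_{j\neq i}(x-\alpha_j)$ of degree at most $n-1$, the hypothesis $\dim_K K[a]\geq n$ --- which says $1,a,\ldots,a^{n-1}$ are $K$-linearly independent --- forces $P$ to be the zero polynomial in the relevant polynomial ring.

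It then remains to extract the individual $c_k$. Evaluating the polynomial identity $P\equiv 0$ at $x=\alpha_k$ annihilates every summand except the $i=k$ one, because for $i\neq k$ the product $\prod_{j\neq i}(x-\alpha_j)$ contains the factor $x-\alpha_k$, leaving
\[
c_k\prod_{j\neq k}(\alpha_k-\alpha_j)=0.
\]
The $\alpha_j$ being pairwise distinct makes this product nonzero, so $c_k=0$ for every $k$, and the $(a-\alpha_i)^{-1}$ are linearly independent as claimed. The only real delicacy is a bookkeeping one: the coefficients $c_i$ and the central elements $\alpha_j$ must be regarded as living in a common subfield of $D$ so that $P$ genuinely is a polynomial whose vanishing at $a$ is controlled by $\dim_K K[a]$. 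In the intended setting (where $K$ already contains the $\alpha_j$, or where one takes $K=Z(D)$) this is automatic; outside that setting it is the one step one would have to handle with care, by enlarging $K$ to $K(\alpha_1,\ldots,\alpha_n)\subseteq Z(D)$ and checking that the dimension hypothesis still delivers $P=0$.
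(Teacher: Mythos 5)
The paper does not actually prove this statement: it is imported verbatim from Rowen with only a citation, so there is no in-paper argument to compare yours against, and your proposal has to be judged on its own. On its own terms it is the standard argument and it is essentially sound. Clearing the common denominator $\prod_{j}(a-\alpha_j)$ on the right is legitimate because the factors $a-\alpha_j$ pairwise commute (the $\alpha_j$ being central) and each $(a-\alpha_i)^{-1}$ is cancelled against the reordered factor $a-\alpha_i$, while the coefficients $c_i$ never have to move past anything; and the Lagrange-type evaluation at $\alpha_k$, which again uses only centrality of the $\alpha_j$, correctly shows that $P$ has a nonzero coefficient as soon as some $c_k\neq 0$. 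In the only situation in which the paper invokes the theorem, namely $K=F=Z(D)$ with $a$ algebraic over $F$, your identification of ``${\rm dim}_KK[a]\geq n$'' with the left $K$-independence of $1,a,\dots,a^{n-1}$ is exact and the proof is complete. The loose end you flag at the end is, however, a genuine one for the statement in its full generality: if $a$ does not centralize $K$, then $K[a]$ (the ring generated by $K$ and $a$) properly contains $\sum_i Ka^i$, so ${\rm dim}_KK[a]\geq n$ does not by itself yield independence of the powers of $a$; moreover the coefficients of $P$ lie in the field generated by $K$ and the $\alpha_j$ rather than in $K$, and passing to that larger coefficient field could a priori lower the dimension bound below $n$. (Also, the inclusion $K(\alpha_1,\dots,\alpha_n)\subseteq Z(D)$ you write is a slip: that field contains $K$, which need not be central.) None of this affects the paper, which only uses the central, commutative case, but a self-contained proof of the theorem as literally stated would have to settle that bookkeeping rather than defer it.
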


They then conjecture that a division ring is generated by all multiplicative commutators as a vector space over its centre, i.e., $T(D) = D$ for any arbitrary division ring. Recently, Hazrat has shown that if $D=L((x,\sigma))$ is the formal Laurent series in which $L/F$ is a field extension and $\sigma\in Gal(L/F)$ is of infinite order, then ${\rm dim}_{T(D)}D=\infty$~\cite{hazrat}. However, this example disproves the conjecture in general case, but still one can assume that conjecture holds for algebraic division rings. In fact, the case which still has not received answer is about algebraic division rings with characteristic $p>0.$ In this note, we show that the conjecture is true for centrally finite division rings, i.e. every division ring finite dimensional over its centre could be recovered as a vector space by its all multiplicative commutators.

%================================================================================================================================
%\section{Main Results}

We start with following result.

\begin{thm}\label{thm}
Let $D$ be an algebraic non-commutative division ring over its centre $F.$ Then $T(D)$ is a non-central Lie ideal in $D.$
\end{thm}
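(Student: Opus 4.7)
The plan is to verify the two claims of the statement separately: first that $T(D)\not\subseteq F$, and then that $[T(D),D]\subseteq T(D)$.

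For non-centrality I argue by contradiction. Suppose every multiplicative commutator lies in $F$. Then for each $x\in D^{*}$ the conjugation map $\sigma_{x}(y)=xyx^{-1}$ satisfies $\sigma_{x}(y)\in Fy$ for every $y\in D^{*}$, so $\sigma_{x}(y)=\lambda_{y}y$ with $\lambda_{y}\in F^{*}$. For any $y\in D\setminus F$, applying the $F$-linearity of $\sigma_{x}$ to $1+y$ gives $1+\lambda_{y}y=\lambda_{1+y}(1+y)$; since $\{1,y\}$ is $F$-linearly independent this forces $\lambda_{y}=1$. Combined with $\lambda_{y}=1$ for $y\in F^{*}$, this shows $\sigma_{x}=\mathrm{id}$, so $x$ is central. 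As $x$ was arbitrary one would conclude $D=F$, contrary to hypothesis.

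For the Lie ideal property, by $F$-linearity of $T(D)$ it suffices to prove $[c,d]\in T(D)$ for a single multiplicative commutator $c=xyx^{-1}y^{-1}$ and an arbitrary $d\in D$. If $d\in F$ then $[c,d]=0$, so assume $d\notin F$. Setting $u:=dcd^{-1}c^{-1}$, itself a multiplicative commutator and hence in $T(D)$, a direct computation yields the identity
$$
[c,d] \;=\; cd-dc \;=\; (1-u)\,cd,
$$
and $1-u\in T(D)$ because $1=1\cdot 1\cdot 1^{-1}\cdot 1^{-1}$ is also a multiplicative commutator.

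The main obstacle is that this exhibits $[c,d]$ as a product of an element of $T(D)$ with $cd\in D$, whereas $T(D)$ is not a priori closed under multiplication by arbitrary elements of $D$. To absorb the factor $cd$ I would combine three ingredients: the conjugation-invariance of $T(D)$ (any conjugate of a multiplicative commutator is again a multiplicative commutator), the result of \cite{aa1} that every separable element of $D$ lies in $T(D)$, and Theorem~\ref{rowen} applied to the finite-dimensional subfield $F[d]$ (finite because $D$ is algebraic over $F$). A partial-fraction expansion of $cd$ in the family $\{(d-\alpha)^{-1}\}$ for distinct central $\alpha$, together with the conjugation identities applied to $u$, should allow $(1-u)\cdot cd$ to be rewritten as an $F$-combination of conjugates of multiplicative commutators. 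In characteristic zero this entire mechanism is superfluous because $T(D)=D$ already by \cite{aa1}, so the substantive difficulty lives in positive characteristic.
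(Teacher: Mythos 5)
Your argument splits into two halves that fare very differently. The non-centrality half is correct and complete: if every multiplicative commutator were central, each inner automorphism $\sigma_x$ would scale every nonzero element by a central scalar, and your evaluation at $1+y$ forces all these scalars to equal $1$, so every $x$ would be central and $D=F$. This is a clean Cartan--Brauer--Hua--style argument; the paper does not even spell this half out, so here you supply a genuine proof where the paper only relies on a citation.

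The Lie-ideal half, however, has a genuine gap, and it sits exactly where you flag ``the main obstacle.'' Your identity $[c,d]=(1-u)cd$ with $u=dcd^{-1}c^{-1}$ is correct and, after simplification, is the same starting point as the paper's: since $1-u=(c-dcd^{-1})c^{-1}$, you have $[c,d]=(c-dcd^{-1})d$, i.e.\ an element of $T(D)$ times the single factor $d$ (not $cd$; in particular a ``partial-fraction expansion of $cd$'' in the family $\{(d-\alpha)^{-1}\}$ does not parse, because $cd\notin F(d)$ in general --- what needs expanding is $d$ itself). What is missing is the mechanism that lets you multiply $c-dcd^{-1}$ on the right by elements of $F(d)$ without leaving $T(D)$. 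The paper supplies it via the identity
$$r\bigl(dcd^{-1}-c\bigr)=\Bigl(dcd^{-1}-(d+r)c(d+r)^{-1}\Bigr)(d+r),\qquad r\in F,$$
which, by conjugation-invariance of $T(D)$, gives $(dcd^{-1}-c)(d+r)^{-1}\in T(D)$ for every nonzero $r\in F$; then Theorem~\ref{rowen}, together with $F$ infinite and $\dim_FF(d)=n<\infty$, shows that $n$ such inverses $(d+r_i)^{-1}$ form an $F$-basis of $F(d)$, whence $(dcd^{-1}-c)F(d)\subseteq T(D)$ and in particular $(dcd^{-1}-c)d=[d,c]\in T(D)$. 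Your sketch names the right tools (conjugation-invariance and Theorem~\ref{rowen}) but never produces this identity, and without it the factor $d$ cannot be absorbed; the appeal to separable elements lying in $T(D)$ plays no role here. As written, the proposal does not close.
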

\begin{proof}{First note that if $F$ is finite, then $D$ is commutative, a contradiction. Hence we may assume that $F$ is infinite and let $a\in D,~x\in T(D)$ be arbitrary. We show that $ax-xa\in T(D).$ If $ax=xa,$ then we have nothing to prove. Otherwise, for every $r\in F$ we have
$$
\begin{array}{lll}0\neq r(axa^{-1}-x)=raxa^{-1}+ax-ax-rx\\~~~~~~~~~~~~~~~~~~~~~~~=axa^{-1}(a+r)-(a+r)x\\~~~~~~~~~~~~~~~~~~~~~~~=(axa^{-1}-(a+r)x(a+r)^{-1})(a+r).\end{array}
$$
Since  $T(D)$ is invariant under conjugation, for every $r\in F$ we obtain that $$(axa^{-1}-x)(a+r)^{-1}=r^{-1}(axa^{-1}-(a+r)x(a+r)^{-1})\in T(D).$$
Now, since $a$ is algebraic over $F$ we have ${\rm dim}_FF(a)=n<\infty.$ Thus by Theorem~\ref{rowen}, for every $r_1,\dots,r_n,~~(a-r_1)^{-1},\dots,(a-r_n)^{-1}$ are linearly independent and consequently are basis for $F(a)$ over $F.$ Therefore, for every $a\notin C_D(x),~~(axa^{-1}-x)F(a)\subseteq T(D).$ In particular, $ax-xa=(axa^{-1}-x)a\in T(D).$ This completes the proof.
}
\end{proof}

\begin{rem}\label{rem1}
By a result due to Herstein~\cite[p. 9, Theorem 1.5 ]{her}, every non-central Lie ideal of a division ring which is not $4$-dimensional over a field of characteristic $2,$ contains the additive commutator subgroup of the division ring. Combining this fact with previous theorem we find that if $D$ is a division ring algebraic over its centre $F,$ then $[D,D]\subseteq T(D)$ provided that either ${\rm char}(D)\neq 2$ or ${\rm dim}_FD>4.$
\end{rem}

\begin{rem}\label{rem}
Let $D$ be a non-commutative division ring finite dimensional over its center $F,$ and let $K=F(a)$ be the separable maximal subfield of $D$~\cite[P. 244]{lam}. It is known that every element of a division ring that commutes with all multiplicative commutators, is central~\cite[p. 210]{lam}. By this fact,  $T(D)$ is not commutative. Then since $T(D)$ contains all separable elements of $D,$ one can obtain that $K\varsubsetneq T(D)$~\cite[Theorem 2.3]{aa1}. Hence, ${\rm dim}_FT(D)>{\rm dim}_FK.$
\end{rem}

It is known that if $D$ is a centrally finite division ring, then $[D,D]$ is a hyperspace. Indeed, let $D$ be a division ring of dimension $n^2$ over its centre $F$ and suppose that $K$ is a splitting field for $D,$ i.e. $D\otimes_FK\simeq M_n(K).$ Now, looking at $[D,D]$ as a right $F$-module we know that
$$\begin{array}{lll}{\rm dim}_F[D,D]={\rm dim}_K([D,D]\otimes_FK)\\~~~~~~~~~~~~~~~={\rm dim}_K[D\otimes_FK,D\otimes_FK]\\~~~~~~~~~~~~~~~={\rm dim}_K[M_n(K),M_n(K)]={\rm dim}_KSl_n(K)=n^2-1.\end{array}$$

\begin{thm}
Let $D$ be a division ring finite dimensional over its centre $F.$  Then $D$ is generated as a vector space by all of its multiplicative commutators over $F,$ i.e. $D=T(D).$
\end{thm}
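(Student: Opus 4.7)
If $D$ is commutative then $D = F$ and $T(D) = F = D$ trivially, so I may assume $D$ is non-commutative with $\dim_F D = n^2$, $n \geq 2$. My plan is to combine the structural facts already established: $T(D)$ is a non-central Lie ideal of $D$ (Theorem~\ref{thm}), $T(D)$ contains the identity and a separable maximal subfield $K$ of $D$ (Remark~\ref{rem}), and $[D,D]$ is an $F$-hyperplane in $D$ as computed in the paragraph preceding the statement.

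I would split according to whether we are in Herstein's exceptional case. First, assume $\mathrm{char}(F) \neq 2$ or $\dim_F D > 4$. By Remark~\ref{rem1} we then have $[D,D] \subseteq T(D)$, and since $[D,D]$ has $F$-codimension one, either $T(D) = D$ or $T(D) = [D,D]$. To rule out the latter, observe that $K \subseteq T(D)$ would force $K \subseteq [D,D]$, so the reduced trace $\mathrm{Trd}_{D/F}$ would vanish on all of $K$. But for a maximal subfield, $\mathrm{Trd}_{D/F}|_K$ coincides with the field trace $Tr_{K/F}$, and separability of $K/F$ makes this trace surjective, a contradiction. Hence $T(D) = D$.

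In the exceptional case $\mathrm{char}(F) = 2$ and $\dim_F D = 4$, $D$ is a quaternion division algebra, and I would pick a separable maximal subfield $K = F(a)$ and normalize $a$ so that $a^2 + a \in F$. By the Skolem--Noether theorem there exists $b \in D^*$ with $bab^{-1} = a+1$; then $ba = (a+1)b$ and $\{1, a, b, ab\}$ is an $F$-basis of $D$. Since $a \in K \subseteq T(D)$ and $T(D)$ is a Lie ideal, both $[b,a] = (bab^{-1} - a)b = b$ and $[ab, a] = a(ba) - a^2 b = (a^2+a)b - a^2 b = ab$ lie in $T(D)$; combined with $1 \in T(D)$, all basis elements belong to $T(D)$, forcing $T(D) = D$.

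The main obstacle is the exceptional case, where Herstein's inclusion $[D,D] \subseteq T(D)$ is unavailable and a dimension count alone does not suffice; instead one must exploit the explicit quaternion relations together with the Lie-ideal property of $T(D)$ to climb from $K$ all the way to a full basis of $D$.
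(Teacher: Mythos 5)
Your proof is correct, and while your treatment of the generic case matches the paper's in substance, your handling of the exceptional case is genuinely different and noticeably more direct. For $\mathrm{char}(F)\neq 2$ or $\dim_FD>4$ you and the paper both squeeze $T(D)$ between the hyperplane $[D,D]$ and $D$ via Herstein's theorem and then rule out $T(D)=[D,D]$ by a trace argument; the paper phrases this through the subspace $M=\{x: Tr_{F(x)/F}(x)=0\}$ and the observation that $[D,D]=M$, whereas you invoke the reduced trace restricted to a separable maximal subfield --- equivalent content, slightly cleaner packaging. The real divergence is the quaternion case in characteristic $2$: the paper argues by contradiction from ${\rm dim}_FT(D)=3$, introduces $L=[D,D]\cap T(D)$, shows $L$ is conjugation-invariant and, via the resolvent argument of Theorem~\ref{thm}, forces $L=[D,D]$ and hence $T(D)=[D,D]$, which is then refuted by the trace argument again. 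You instead normalize a separable maximal subfield to Artin--Schreier form $a^2+a\in F$, use Skolem--Noether to produce $b$ with $bab^{-1}=a+1$, and compute directly that $[b,a]=b$ and $[ab,a]=ab$ lie in $T(D)$ by the Lie-ideal property, so that the full basis $\{1,a,b,ab\}$ sits inside $T(D)$. Your computation checks out (in characteristic $2$, $ba=(a+1)b$ gives $ba-ab=b$ and $(ab)a-a(ab)=(a^2+a)b-a^2b=ab$), and it buys a constructive, self-contained resolution of the exceptional case that avoids the paper's somewhat delicate dimension-counting with $L$; what it costs is the use of Skolem--Noether and the explicit Artin--Schreier normalization, which the paper's more abstract route does not need.
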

\begin{proof}{Assume that $F$ is finite. Then $D$ is finite and by the Wedderburn's Little Theorem must be commutative and there is nothing to prove. So, we may assume that $F$ is infinite.  Put $M=\{x\in D| Tr_{F(x)/F}(x)=0\}.$ Clearly, $M$ is a subspace of $D$ that contains $[D,D].$ Since $[D,D]$ is a hyperspace easily it is seen that either $[D,D]=M$ or $M=D.$ The latter case cannot occur, because by the Noether-Jacobson Theorem~\cite[p. 244]{lam}, $D$ contains some separable elements, say $b.$ Hence, there would be existing an element $c\in F(b)$ whose trace is non-zero~\cite[p. 86]{mor}. Therefore, $x\in [D,D]$ if and only if $Tr_{F(x)/F}(x)=0.$

 First assume that either ${\rm dim}_FD\neq4$ or ${\rm char}(F)\neq2.$ Combining Remark~\ref{rem1} with the fact that the additive group $[D,D]$ is a hyperspace, we obtain either $[D,D]=T(D)$ or $D=T(D).$  Note that $T(D)=[D,D]$ could not be the case, because as mentioned in Remark~\ref{rem} $T(D)$ admits some separable elements with non-zero trace while as the first paragraph of the proof $[D,D]$ is the set of all elements of trace zero.

 Now, assume that ${\rm dim}_FD=4$ and ${\rm char}(F)=2.$ By Remark~\ref{rem}, ${\rm dim}_FT(D)=3$ or ${\rm dim}_FT(D)=4.$ If ${\rm dim}_FT(D)=4$ the proof is completed. By the contrary suppose that ${\rm dim}_FT(D)=3.$ Observe that if for every $x\in D,~ x^{-1}\in T(D),$ then $D=T(D)$ and we are done. So, pick $x\in D$ such that $x^{-1}\notin T(D).$ Since $T(D)$ is a Lie ideal, for every $a\in T(D)$ we have $ax-xa\in T(D)\setminus F.$ This shows that $L:=[D,D]\cap T(D)\nsubseteq F.$ It is not hard to verify that $L$ is invariant under conjugation (in fact, if $x\in [D,D],$ then for every $a\in D^*$ we have $axa^{-1}-x=(ax)a^{-1}-a^{-1}(ax)\in [D,D]$). Let $b$ be a non-central separable element in $D.$ Then since $F(b)$ is a maximal subfield containing some elements with non-zero trace, we find that $F(b)\nsubseteq L.$ Therefore, argument similar to the proof of Theorem~\ref{thm} shows that there exists an element $x\in L$, which does not commute with $b,$ and $(bxb^{-1}-x)F(b)\subseteq L.$ Therefore, $L\cap Lb\neq 0,$ and in particular ${\rm dim}_F(L\cap La)\geq {\rm dim}_FF(b)=2.$ Also, $b\in Lb\setminus L$ implies that $L\cap Lb\varsubsetneq Lb,$ and ${\rm dim}_FLb={\rm dim}_FL=3.$ Thus $[D,D]\cap T(D)=L=[D,D].$ Combining this fact with contrary assumption ${\rm dim}_FT(D)=3$ yields that $T(D)=[D,D].$ Now, repeating aforementioned arguments achieves contradiction. This completes the proof.}
\end{proof}
\begin{rem}
Note that by~\cite[Theorem 2]{akb} if $D$ is a division ring algebraic over its centre $F$ with characteristic zero, then ${\rm dim}_FD/[D,D]\leq 1.$ Hence, applying similar argument as above yields that if $D$ is algebraic over its centre with characteristic zero, then one can recover $D$ from $T(D).$
\end{rem}
%=================================================================================================================

%====================================================================================================================================

\end{document}